\documentclass[11pt]{article}
\usepackage{jalc}
\usepackage{bbm}
\usepackage[usenames]{color} % Farbunterstützung

\usepackage[procnames]{listings}

\usepackage{textcomp}
\usepackage{setspace}

\definecolor{white}{rgb}{1.0,1.0,1.0}
\definecolor{gray}{gray}{0.5}
\definecolor{green}{rgb}{0,0.5,0}
\definecolor{lightgreen}{rgb}{0,0.7,0}
\definecolor{purple}{rgb}{0.5,0,0.5}
\definecolor{darkred}{rgb}{0.5,0,0}
\definecolor{orange}{rgb}{1,0.5,0}
\definecolor{lbcolor}{rgb}{0.95,0.95,0.95}
\definecolor{framecolor}{rgb}{0,0.675,0.878}
\definecolor{darkblue}{rgb}{0,0,.6}

\lstset{
language=python,
basicstyle=\ttfamily\small\setstretch{1},
stringstyle=\color{green},
backgroundcolor=\color{lbcolor},
showstringspaces=false,
alsoletter={1234567890},
otherkeywords={\ , \}, \{},
keywordstyle=\color{blue},
emph={access,and,as,break,class,continue,def,del,elif,else,%
except,exec,finally,for,from,global,if,import,in,is,%
lambda,not,or,pass,print,raise,return,try,while,assert},
emphstyle=\color{orange}\bfseries,
emph={[2]self},
emphstyle=[2]\color{gray},
emph={[4]ArithmeticError,AssertionError,AttributeError,BaseException,%
DeprecationWarning,EOFError,Ellipsis,EnvironmentError,Exception,%
False,FloatingPointError,FutureWarning,GeneratorExit,IOError,%
ImportError,ImportWarning,IndentationError,IndexError,KeyError,%
KeyboardInterrupt,LookupError,MemoryError,NameError,None,%
NotImplemented,NotImplementedError,OSError,OverflowError,%
PendingDeprecationWarning,ReferenceError,RuntimeError,RuntimeWarning,%
StandardError,StopIteration,SyntaxError,SyntaxWarning,SystemError,%
SystemExit,TabError,True,TypeError,UnboundLocalError,UnicodeDecodeError,%
UnicodeEncodeError,UnicodeError,UnicodeTranslateError,UnicodeWarning,%
UserWarning,ValueError,Warning,ZeroDivisionError,abs,all,any,apply,%
basestring,bool,buffer,callable,chr,classmethod,cmp,coerce,compile,%
complex,copyright,credits,delattr,dict,dir,divmod,enumerate,eval,%
execfile,exit,file,filter,float,frozenset,getattr,globals,hasattr,%
hash,help,hex,id,input,int,intern,isinstance,issubclass,iter,len,%
license,list,locals,long,map,max,min,object,oct,open,ord,pow,property,%
quit,range,raw_input,reduce,reload,repr,reversed,round,set,setattr,%
slice,sorted,staticmethod,str,sum,super,tuple,type,unichr,unicode,%
vars,xrange,zip},
emphstyle=[4]\color{purple}\bfseries,
upquote=true,
morecomment=[s][\color{lightgreen}]{"""}{"""},
commentstyle=\color{red}\slshape,
literate={>>>}{\textbf{\textcolor{darkred}{>{>}>}}}3%
         {...}{{\textcolor{gray}{...}}}3,
procnamekeys={def,class},
procnamestyle=\color{blue}\textbf,
framexleftmargin=2pt, 
framextopmargin=1mm, 
frame=l,
rulecolor=\color{framecolor},
framerule=15pt,
%linewidth=12cm,
numbers=left,
numbersep=8pt,
numberstyle=\footnotesize\color{white}\sffamily,
breaklines=true,
rulesepcolor=\color{framecolor},
xleftmargin=20pt
}

\newtheorem{theorem}{Theorem}

\newtheorem{definition}{Definition}

\newtheorem{example}{Example}

\newcommand{\set}[1]{\{#1\}}

\title{On the Number of Many-to-Many Alignments of Multiple Sequences}
\markboth{S.~Eger}{On the Number of Many-to-Many Alignments of
  Multiple Sequences} 
\author{Steffen Eger}
\address{Computer Science Department, 
Goethe University Frankfurt am Main \\ Robert-Mayer-Stra{\ss}e 10,
Frankfurt, Germany
}
\email{steeger@em.uni-frankfurt.de}
\abstract{We count the number of alignments of $N \ge 1$ sequences when
  match-up types are from a specified set $S\subseteq \mathbb{N}^N$. Equivalently, we count
  the number of non-negative integer matrices whose rows sum to a
  given fixed vector and each of whose columns lie in $S$. We provide a
  new asymptotic formula for the case $S=\set{(s_1,\ldots,s_N) \:|\:
    1\le s_i\le 2}$.}
\keywords{Alignment, composition, sum of discrete random
  variable, lattice path}

\begin{document}
\maketitle

\section{Introduction}
Alignments of sequences arise in %bioinformatics 
computational biology 
and in computational linguistics. In computational biology, aligning DNA sequences is a
standard task.  
In computational linguistics, aligning
(historical) variants of linguistic forms is a field of study
(see \cite{Covington:1998}). In addition, alignments of sequences
arise in computational linguistics either in 
(machine) translation, where words from different languages are
matched up, or in related string-to-string translation tasks such as
letter-to-sound conversion, where the task is to translate a
letter string into a phonetic representation, or in lemmatization, where
the task is to translate a word form into a canonical lexicon
representation. 

Traditionally, an alignment of $N$ (for an integer $N\ge 2$)
sequences of various lengths is 
defined as a manner of inserting blanks into the $N$ sequences such
that all have equal length. For example, given $\mathbf{x}=x_1$,
$\mathbf{y}=y_1y_2$ and $\mathbf{z}=z_1z_2z_3$, three (out of $239$
possible) alignments of $\mathbf{x},\mathbf{y}$ and $\mathbf{z}$ are: 

\begin{center}
\begin{tabular}{ccc}
  $x_1$ & - & -\\
  $y_1$ & $y_2$ & -\\
  $z_1$ & $z_2$ & $z_3$
\end{tabular}\hspace{0.55cm}
\begin{tabular}{cccc}
  $x_1$ & - & - & -\\
  $y_1$ & - & $y_2$ & -\\
  $z_1$ & $z_2$ & - & $z_3$
\end{tabular}\hspace{0.55cm}
\begin{tabular}{ccc}
  - & - & $x_1$\\
  $y_1$ & $y_2$ & -\\
  $z_1$ & $z_2$ & $z_3$
\end{tabular}
\quad\quad\quad $(\clubsuit)$
\end{center}
%More formally and more precisely, 
%\footnote{Alignments
%in machine translation are typically non-monotone in that edges in the
%alignments may cross. Here, we concern
%ourselves solely with monotone alignments.} 
%Examples are given in
%Figure [].
As can be seen, each such alignment has the property that, in each
position, an element 
of one of the sequences is matched up with one or zero elements from
each of the other sequences. 
In computational linguistics, alignments in which \emph{subsequences %are
%matched up with subsequences
of length $\ge 1$} from the different sequences are matched up with
each other (`\textbf{many-to-many matches}') are oftentimes more plausible and also more frequently made
of use of (see \cite{Eger:2015,Jiam:2007}). When we allow, for example, 
in addition to the above specification, matches-up of length up to
$2$, there are several further alignments of
$\mathbf{x},\mathbf{y}$ and $\mathbf{z}$, including:
\begin{center}
\begin{tabular}{cc}
  $x_1$ & - \\
  $y_1y_2$ & -\\
  $z_1z_2$ & $z_3$
\end{tabular}\hspace{0.55cm}
\begin{tabular}{cc}
  - & $x_1$ \\
  $y_1y_2$ & -\\
  $z_1z_2$ & $z_3$
\end{tabular}\hspace{0.55cm}
\begin{tabular}{ccc}
  - & - & $x_1$\\
  - & $y_1y_2$ & -\\
  $z_1$ & $z_2$ & $z_3$
\end{tabular}
\quad\quad\quad $(\clubsuit\clubsuit)$
\end{center}

\noindent Several works have counted the number of alignments of two and more
(\textbf{`multiple'}) sequences, see
\cite{Andrade:2014,Covington:2004,Eger:2012,Eger:2013a,Griggs:1990,Rodseth:2006,Slowinski:1998,Torres:2003}. These  
works typically referred to the traditional 
definition of multiple alignments outlined above. In this note, we
count the 
number of alignments of $N$ sequences in which match-up types lie in an
arbitrary set $S$. 
%, thus %comprises 
%generalizing the traditional setup. 
The set $S$ may (but need not)
contain many-to-many matches in the above sense. Hence, our approach
generalizes the traditional setup. 
%definition
%of multiple alignments.
%, but also counts alignments with 
%%subsequence
%many-to-many 
%match-ups.  

This work is structured as follows. 
Section \ref{sec:definition} gives a precise definition of multiple
alignments as we consider here. Section \ref{sec:background} places
our work in context. Section \ref{sec:counting} outlines three
theorems on multiple many-to-many alignments, which we will make use
of in Section \ref{sec:cases}: (i) a recursion, which easily allows to
calculate the number of alignments for arbitrary $S$; (ii) the multivariate
generating function 
for the number of multiple many-to-many alignments,
from which we can derive asymptotics; and (iii) a summation of the number of 
alignments 
in terms of binomial coefficients, which allows for specifying
closed-form formulas. Finally, Section \ref{sec:cases} surveys and outlines
formulas for specific $S$. In particular, we derive a new asymptotic
for the number of multiple many-to-many alignments in which subsequences
of length $1$ or $2$ are matched up with each other. 
%Counting the number of alignments has arisen as a subject of study on
%its own. 

%In this short communication, we count the number of alignments of $N$
%sequences in which many-to-many matches are allowed. The classical
%setup of alignments of two
%sequences in which only matches/mismatches, insertions and deletions are allowed
%are a special case of our results.  

\section{Defining multiple many-to-many alignments}\label{sec:definition}

%Let $S\subseteq$
For $N\ge 1$ fixed, let $\mathbf{x}_1,\ldots,\mathbf{x}_N$ be $N$ sequences
(over arbitrary alphabets) of finite lengths $\ell_1,\ldots,\ell_N$,
respectively.   
We define a multiple many-to-many alignment of the $N$ sequences
with respect to a set $S$ of `allowable steps'. The set $S$ defines
the valid match-up operations. 

\begin{definition}\label{def:alignment}
  Let $S$ be an arbitrary subset of $\mathbb{N}^N$, where $\mathbb{N}$
  is the set of non-negative integers. 
  We define an \emph{$S$-alignment} of
  $\mathbf{x}_1,\ldots,\mathbf{x}_N$ as any $N\times k$ matrix
  $\mathbf{A}$, for some 
  $k\ge 1$, 
   with non-negative integer entries such that
  \begin{itemize}
    \item Each row of $\mathbf{A}$ sums to the length of the respective sequence:
      $\mathbf{A}\mathbbm{1}_k=\begin{pmatrix}\ell_1\\ \vdots 
      \\ \ell_N\end{pmatrix}$. Here, $\mathbbm{1}_k$ is the vector, of
      size $k$, with all entries equal to $1$. 
    \item Each column of $\mathbf{A}$ is an element of
      $S$.\footnote{In our notation below, we do not distinguish
      column from row vectors.}
  \end{itemize}
\end{definition}
\begin{example}
  When $S=\set{(1,1),(1,2),(2,1)}$, there are seven $S$-alignments of a string of length $4$ and a string
  of length $5$. These are:
  \begin{align*}
    &\begin{pmatrix}
      2 & 1 & 1 \\
      1 & 2 & 2 \\
    \end{pmatrix}\quad
    \begin{pmatrix}
      1 & 2 & 1 \\
      2 & 1 & 2 \\
    \end{pmatrix}\quad
    \begin{pmatrix}
      1 & 1 & 2 \\
      2 & 2 & 1 \\
    \end{pmatrix}\quad
    %\begin{pmatrix}
    %  1 & 2 & 1 \\
    %  1 & 1 & 1 \\
    %\end{pmatrix}\\
    \\
    & \begin{pmatrix}
      1 & 1 & 1 & 1 \\
      1 & 1 & 1 & 2 \\
    \end{pmatrix}\quad
    \begin{pmatrix}
      1 & 1 & 1 & 1 \\
      1 & 1 & 2 & 1 \\
    \end{pmatrix}\quad
    \begin{pmatrix}
      1 & 1 & 1 & 1 \\
      1 & 2 & 1 & 1 \\
    \end{pmatrix}\quad
    \begin{pmatrix}
      1 & 1 & 1 & 1 \\
      2 & 1 & 1 & 1 \\
    \end{pmatrix}
  \end{align*}
\end{example}
\begin{example}
  The traditional definition of multiple alignment of $N$ sequences is
  retrieved when $S=\set{(s_1,\ldots,s_N)\,|\, 0\le s_i\le
    1}-\set{(0,\ldots,0)}$. Thus, for $N=2$, $S$ has the form
  $\set{(1,0),(0,1),(1,1)}$ and for $N=3$, $S$ has the form
  \begin{align*}
    \set{(0,0,1),(0,1,0),(1,0,0),(0,1,1),(1,0,1),(1,1,0),(1,1,1)}.
  \end{align*}
  The three $S$-alignments in $(\clubsuit)$ have the matrix
  representations
   \begin{align*}\begin{pmatrix}1 & 0 & 0\\ 1 & 1 & 0 \\ 1 
    & 1 & 1\end{pmatrix}, 
    \begin{pmatrix}1 & 0 & 0 & 0 \\ 1 & 0 & 1 & 0 \\ 1 &
    1 & 0 & 1\end{pmatrix}, %and 
    \begin{pmatrix}0 & 0 & 1 \\ 1 & 1 & 0 \\ 1 &
    1 & 1\end{pmatrix}
    \end{align*}
  respectively. 
\end{example}
\begin{example}
  The three alignments given in $(\clubsuit\clubsuit)$ have the 
  matrix representations 
  \begin{align*}\begin{pmatrix}1 & 0 \\ 2 & 0 \\ 2 &
    1\end{pmatrix}, 
    \begin{pmatrix}0 & 1 \\ 2 & 0 \\ 2 &
    1\end{pmatrix}, %and 
    \begin{pmatrix}0 & 0 & 1 \\ 0 & 2 & 0 \\ 1 &
    1 & 1\end{pmatrix}
    \end{align*}
  respectively. 
  Hence, they are $S$-alignments of $\mathbf{x}, \mathbf{y},
  \mathbf{z}$ for any 
    $S\supseteq\set{(1,2,2),(0,0,1),(0,2,2),(1,0,1),(0,2,1)}$. 
\end{example}
For the remainder, we use the following \textbf{notation}. We
denote a tuple $(s_1,\ldots,s_N)$ also by $\mathbf{s}$ when this
causes no confusion. We write $\mathbf{A}_i$ for column $i$ of matrix
$\mathbf{A}$. We denote by $a_S(\ell_1,\ldots,\ell_N)$ the number of
$S$-alignments of sequences of lengths $\ell_1,\ldots,\ell_N$. We
denote by $a_S(\ell_1,\ldots,\ell_N;k)$ the number of
$S$-alignments of sequences of lengths $\ell_1,\ldots,\ell_N$ when the
number of columns ($=$ \emph{length}, or number of \emph{parts}, of the alignment) of the
respective matrices is exactly $k$. 
We let $A_{S}(\ell_1,\ldots,\ell_N)$ and $A_{S}(\ell_1,\ldots,\ell_N;k)$, respectively, denote the corresponding sets of $S$-alignments. 
We write $\mathbf{0}_N$ for the
vector $(0,\ldots,0)\in\mathbb{N}^N$.
%, where
%$\mathbb{N}=\set{0,1,2,\ldots}$. 

\section{Background}\label{sec:background}
%Munarini, Matrix Compositions:
%$c^{(N)}(n)=\sum_{\ell_1+\cdots+\ell_N=n}a_S(\ell_1,\ldots,\ell_N)$, where
%$S=\mathbb{N}^N-\mathbf{0}_N$. They show: even regular language for
%this $S$.
Multiple many-to-many alignments as we consider here 
%are
%closely related to what is called 
generalize the concept of \emph{vector compositions} introduced in
\cite{Andrews:1975}. In fact, vector compositions are
$S$-alignments in which 
$S=\mathbb{N}^N-\set{\mathbf{0}_N}$. For the same $S$, Munarini et
al.\ \cite{Munarini:2008} introduce \emph{matrix compositions}. These
are matrices whose entries sum to a positive integer $n$ and whose columns are
non-zero. We find that the number $c^{(N)}(n)$ of matrix compositions 
with $N$ rows satisfies
$c^{(N)}(n)=\sum_{\ell_1+\cdots+\ell_N=n}a_S(\ell_1,\ldots,\ell_N)$. 

Multiple many-to-many alignments are also closely
related to \emph{lattice path combinatorics} 
(see \cite{Stanley:2012}). Lattice paths 
are paths from the origin $\mathbf{0}_N$ to some point 
$(\ell_1,\ldots,\ell_N)$ in which each step lies in some set $S$. In our
case, each coordinate of each step $\mathbf{s}\in S$ is non-negative.   

When $S$ is fixed and finite, then the class 
%of 
%$S$-alignments
\begin{align*}
  L_S=\set{\mathbf{A}\in\mathbb{N}^{N\times k}\,|\,\mathbf{A} \text{ satisfies
    Definition \ref{def:alignment} for some
  } (\ell_1, \ldots, \ell_N) \text{ and
        some $k$}}   
\end{align*}
is clearly a \emph{regular language}. 
%With an approporiate encoding of the
%steps, 
In fact, 
$L_S$ is given by the regular expression
$(\mathbf{s}_1|\cdots|\mathbf{s}_r)^+$, where 
%$r$ is the cardinality
%of $S$.
$\mathbf{s}_1,\ldots,\mathbf{s}_r$ are the elements of $S$.  
Munarini et al.\ \cite{Munarini:2008} specify an encoding
showing that $L_S$ is regular even when
$S=\mathbb{N}^N-\set{\mathbf{0}_N}$. 

%We finally note that 
Multiple many-to-many alignments 
%as we have
%defined 
are also related to \emph{sums of discrete multivariate random
  variables}. In fact, let $X_1,\ldots,X_k$ be independently and
identically distributed random vectors, each of whose range is
$S$. Then,
\begin{align*}
  P[X_1+\cdots+X_k=(\ell_1, \ldots, \ell_N)] = \sum_{\mathbf{A}\in 
    A_S(\ell_1,\ldots,\ell_N;k)}P[\mathbf{A}_1]\cdots P[\mathbf{A}_k].
\end{align*} 
When each $X_i$ is uniformly distributed on $S$, then
\begin{align*}
  P[X_1+\cdots+X_k=(\ell_1, \ldots, \ell_N)] = \frac{1}{|S|^k}a_S(\ell_1,\ldots,\ell_N;k). 
  %\sum_{\mathbf{A}\in
  %  a_S(\ell_1,\ldots,\ell_N;k)}P[\mathbf{A}_1]\cdots P[\mathbf{A}_k].
\end{align*} 
Since the sum $X_1+\cdots+X_k$ is asymptotically normal, the last
equality allows to approximate $a_S(\ell_1,\ldots,\ell_N;k)$ by a
normal curve in the spirit of \cite{Eger:2014}. 

Finally, it is well-known that alignment problems are closely related to
the edit distance problem \cite{Levenshtein:1966}. Edit distance
measures the minimal 
number of insertions, deletions, and substitutions necessary to
transform one string into another. Allowing steps in a
set $S$ would correspond to 
%more complex string transformation
%patterns such as squashing, 
allowing transformation patterns as specified by $S$, such as
insertion (via the step $(0,1)$), deletion ($(1,0)$), or more complex
transformations such as inversion ($(2,2)$), expansion ($(1,2)$) or
squashing ($(2,1)$) (see \cite{Eger:2013a,Oommen:1995}). Considering
$N\ge 
2$ sequences further generalizes this setup.

\section{Counting the number of multiple many-to-many alignments}\label{sec:counting}

%Let us denote the number of $S$-alignments of any $N$ sequences of
%lengths $\ell_1,\ldots,\ell_N$ by $a_S(\ell_1,\ldots,\ell_N)$. 
Theorem
\ref{theorem:count} outlines a recurrence which
$a_S(\ell_1,\ldots,\ell_N)$ satisfies and which allows its efficient
evaluation. %of $a_S(\ell_1,\ldots,\ell_N)$. 

\begin{theorem}\label{theorem:count}
  The quantity $a_S(\ell_1,\ldots,\ell_N)$ satisfies the recurrence 
  \begin{align*}
    a_S(\ell_1,\ldots,\ell_N) = \sum_{(s_1,\ldots,s_N)\in S}
    a_S(\ell_1-s_1,\ldots,\ell_N-s_N) 
  \end{align*}
  with initial conditions $a_S(0,\ldots,0)=1$ and $a_S(n_1,\ldots,n_N)=0$
  whenever $n_m<0$ for some $1\le m\le N$. 
\end{theorem}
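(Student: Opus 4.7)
The plan is a standard last-column decomposition. I would fix $(\ell_1,\ldots,\ell_N)$ with at least one $\ell_m>0$, and partition the set $A_S(\ell_1,\ldots,\ell_N)$ according to the value of the last column $\mathbf{A}_k$ of each alignment matrix $\mathbf{A}$. Since every column of $\mathbf{A}$ must lie in $S$, this last column is some fixed $\mathbf{s}=(s_1,\ldots,s_N)\in S$, and the partition is indexed exactly by $S$.

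Next I would set up, for each $\mathbf{s}\in S$, a bijection between the block of alignments whose last column equals $\mathbf{s}$ and the set $A_S(\ell_1-s_1,\ldots,\ell_N-s_N)$. In one direction, strip off the last column; the remaining $N\times(k-1)$ matrix still has all columns in $S$ and its row sums are $(\ell_1-s_1,\ldots,\ell_N-s_N)$, so it is a valid $S$-alignment (with the convention that when $k=1$ the resulting empty matrix corresponds to the ``alignment'' counted by $a_S(0,\ldots,0)=1$). In the other direction, append $\mathbf{s}$ as a new last column to any $S$-alignment of the shortened sequences. These maps are clearly mutually inverse, and they are well-defined only when $\ell_m-s_m\ge 0$ for every $m$; this is enforced automatically by the boundary convention $a_S(n_1,\ldots,n_N)=0$ whenever some $n_m<0$, so the sum on the right of the claimed recurrence simply discards the infeasible terms. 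Summing the block sizes over $\mathbf{s}\in S$ yields the recurrence.

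Finally I would justify the initial conditions. The assignment $a_S(n_1,\ldots,n_N)=0$ when some $n_m<0$ is by fiat (such tuples are not valid sequence-length vectors) and is used only as a bookkeeping device to keep the recurrence uniform. The value $a_S(0,\ldots,0)=1$ corresponds to the empty alignment obtained by the stripping operation above when $k=1$; adopting it as an initial condition makes the recurrence hold also for those $(\ell_1,\ldots,\ell_N)$ that admit an alignment of length exactly $1$.

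There is no real obstacle here; the only mildly delicate point is reconciling the requirement $k\ge 1$ in Definition \ref{def:alignment} with the need to treat $a_S(0,\ldots,0)=1$ as an ``empty alignment,'' and I would address this explicitly as part of the base case discussion rather than letting it remain implicit.
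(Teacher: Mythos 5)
Your proposal is correct and follows exactly the same route as the paper's proof: a last-column decomposition with the stripping/appending bijection and the empty-alignment convention for the base case. You simply spell out the details (well-definedness, the $k=1$ edge case) that the paper's one-sentence proof leaves implicit.
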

\begin{proof}
  %Each alignment has exactly one vector $\mathbf{s}\in S$ as its last
  %column and the preceding columns form an arbitrary $S$-alignment of
  %$(\ell_1,\ldots,\ell_N)-(s_1,\ldots,s_N)$.  
  Each $S$-alignment of some length $k$ is made up of a last column
  $\mathbf{s}=(s_1,\ldots,s_N)\in S$ and an arbitrary $S$-alignment of
  length $k-1$ of $(\ell_1-s_1,\ldots,\ell_N-s_N)$.
\end{proof}

Below we show sample Python\footnote{See \url{www.python.org}.} code that
computes $a_S(\ell_1,\ldots,\ell_N)$ efficiently based on
Theorem \ref{theorem:count}.  
%\lstinputlisting[language=C]{compute.py}
%\input{compute.py}
%\begin{minted}[mathescape,
%               %linenos,
%               numbersep=5pt,
%               gobble=2,
%               frame=lines,
%               framesep=2mm]{python}
\begin{lstlisting}
   import itertools,numpy as np

   def a(l,S):
     # l = [l_1,...,l_N]
     N = len(l)
     indices = [range(l_i+1) for l_i in l]
     zero = tuple([0 for i in xrange(N)])
     table = {}
     for multiindex in itertools.product(*indices):
        multiindex = tuple(multiindex)
        if multiindex==zero:
            table[multiindex]=1
        else:
            local = 0
            for s in S:
                index = tuple(np.array(multiindex)-np.array(s))
                local += table.get(index,0)
            table[multiindex] = local
    return table[tuple(l)]
\end{lstlisting}
%\end{minted}

\begin{theorem}
  Let $f(z_1,\ldots,z_N)=\sum_{n_1,\ldots,n_m\ge 0}
  a_S(n_1,\ldots,n_m)z_1^{n_1}\cdots z_N^{n_m}$ be the multivariate
  generating function for the number of 
  $S$-alignments. Then $f(z_1,\ldots,z_N)$ has the representation
  \begin{align*}
    f(z_1,\ldots,z_N) = \frac{1}{1-\sum_{(s_1,\ldots,s_N)\in S}\limits
      z_1^{s_1}\cdots z_N^{s_N}}. 
  \end{align*}
\end{theorem}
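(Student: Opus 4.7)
The plan is to derive the identity via the standard symbolic/combinatorial decomposition of an $S$-alignment as a string of columns, exploiting the observation already made in the proof of Theorem \ref{theorem:count}. Because the defining condition of an $S$-alignment only couples its columns through the overall row-sum constraint, and because row sums are additive under column concatenation, the problem is of the ``sequence of atoms'' type, for which the multivariate transfer theorem gives the generating function $1/(1-P)$ where $P$ is the generating function for a single atom.

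Concretely, I would first encode an arbitrary column $\mathbf{s}=(s_1,\ldots,s_N)\in S$ by the monomial $z_1^{s_1}\cdots z_N^{s_N}$, so that the column generating function is
\begin{align*}
P(z_1,\ldots,z_N) \;=\; \sum_{(s_1,\ldots,s_N)\in S} z_1^{s_1}\cdots z_N^{s_N}.
\end{align*}
Since a matrix $\mathbf{A}\in A_S(\ell_1,\ldots,\ell_N;k)$ is nothing more than an ordered $k$-tuple of columns from $S$ whose componentwise sum equals $(\ell_1,\ldots,\ell_N)$, and since multiplying the $k$ column-monomials produces exactly $z_1^{\ell_1}\cdots z_N^{\ell_N}$, it follows that the generating function counting $S$-alignments of a fixed length $k$ is precisely $P(z_1,\ldots,z_N)^k$. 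Summing over $k\ge 0$ (the $k=0$ term encoding the empty alignment, consistent with the initial condition $a_S(\mathbf{0}_N)=1$) yields a geometric series
\begin{align*}
f(z_1,\ldots,z_N) \;=\; \sum_{k\ge 0} P(z_1,\ldots,z_N)^k \;=\; \frac{1}{1-P(z_1,\ldots,z_N)},
\end{align*}
which is the claimed formula.

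As a sanity check, I would verify this against the recurrence of Theorem \ref{theorem:count}: multiplying $a_S(\ell_1,\ldots,\ell_N) = \sum_{\mathbf{s}\in S} a_S(\ell_1-s_1,\ldots,\ell_N-s_N)$ by $z_1^{\ell_1}\cdots z_N^{\ell_N}$, summing over all $(\ell_1,\ldots,\ell_N)\in\mathbb{N}^N\setminus\{\mathbf{0}_N\}$, and re-indexing the right-hand side by $\ell'_i=\ell_i-s_i$ gives the functional equation $f - 1 = P\cdot f$, which rearranges to the same closed form.

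The only delicate point, and what I would flag as the main subtlety rather than a real obstacle, is the interpretation of $1/(1-P)$ as a formal power series. This requires that $P$ have no constant term, i.e.\ $\mathbf{0}_N\notin S$; otherwise arbitrary prefixes of zero columns could be appended, $a_S(\ell_1,\ldots,\ell_N)$ would be infinite, and both sides would fail to make sense. Under this standing assumption, each coefficient of $f$ is determined by finitely many terms of the geometric series, so the identity holds rigorously in $\mathbb{Z}[[z_1,\ldots,z_N]]$.
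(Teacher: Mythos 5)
Your proposal is correct and follows essentially the same route as the paper: identify the generating function for fixed-length-$k$ alignments as the $k$-th power of the column polynomial $P$, then sum the geometric series over $k$. Your additional remark that $\mathbf{0}_N\notin S$ is needed for $1/(1-P)$ to make sense as a formal power series is a worthwhile point of rigor that the paper's two-line proof passes over silently.
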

\begin{proof}
  The generating function for $a_S(\ell_1,\ldots,\ell_N;k)$ is easily
  seen to be
  \begin{align*}
    f(z_1,\ldots,z_N;k) = \Bigl(\sum_{\mathbf{s}\in S} z_1^{s_1}\cdots
    z_N^{s_N}\Bigr)^k. 
  \end{align*}
  The result then follows by summing over $k$. 
\end{proof}

\begin{theorem}\label{theorem:3}
  Let the elements in $S$ be enumerated as
  $\mathbf{s}_1,\mathbf{s}_2,\ldots$. 
  Moreover, let $k\ge 0$ and let
  \begin{align*}
    B_S(k,\ell_1,\ldots,\ell_N)=\set{(r_1,r_2,\ldots,r_t)\,|\, 
    r_i\ge 0,\,\sum_{i=1}^t r_i = k,\, \sum_{i=1}^t\mathbf{s}_ir_i =
    (\ell_1,\ldots,\ell_N)}. 
  \end{align*}
  Here, $t$ is the size of $S$.\footnote{When $S$ has infinite
    size, then all $r_i$ must be zero for which $\mathbf{s}_i$ has a
    component that exceeds the corresponding component of
    $(\ell_1,\ldots,\ell_N)$; hence, only a finite and fixed number $t'$
    of indices in 
    $(r_1,r_2,\ldots,r_t)$ can be non-zero in this case, too.}
  Then 
  \begin{align}\label{eq:rewrite}
    a_S(\ell_1,\ldots,\ell_N) = \sum_{k\ge
      0} \sum_{(r_1,r_2,\ldots,r_t)\in B_S(k,\ell_1,\ldots,\ell_N)}\binom{k}{r_1,r_2,\ldots,r_t},
    %\sum_{\substack{\sum_{i\ge 0}r_i=k\\ \sum_{i\ge
    %      0}\mathbf{s}_ir_i=(\ell_1,\ldots,\ell_N)^\intercal}}\binom{k}{r_1,r_2,\ldots}
  \end{align}
  where $\binom{k}{r_1,r_2,\ldots,r_t}=\frac{k!}{r_1!r_2!\cdots r_t!}$ are the
  \emph{multinomial coefficients}. 
\end{theorem}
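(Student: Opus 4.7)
The plan is to give a direct combinatorial argument by sorting $S$-alignments first by their length (number of columns) $k$ and then by the multiset of columns they use. Since the set of alignments of length exactly $k$ has already been described via its generating function $\bigl(\sum_{\mathbf{s}\in S} z_1^{s_1}\cdots z_N^{s_N}\bigr)^k$ in the previous theorem, I will in fact present two routes and pick the cleaner one.

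For the combinatorial route, I would start from the definition: an $S$-alignment of length $k$ with row sums $(\ell_1,\ldots,\ell_N)$ is an ordered sequence $(\mathbf{A}_1,\ldots,\mathbf{A}_k)$ of columns, each $\mathbf{A}_j\in S$, such that $\mathbf{A}_1+\cdots+\mathbf{A}_k=(\ell_1,\ldots,\ell_N)$. Given such an alignment, let $r_i$ denote the number of columns equal to $\mathbf{s}_i$, for $i=1,\ldots,t$. Then clearly $r_i\ge 0$, $\sum_i r_i = k$, and the row-sum condition becomes $\sum_i r_i \mathbf{s}_i = (\ell_1,\ldots,\ell_N)$, which is exactly membership of $(r_1,\ldots,r_t)$ in $B_S(k,\ell_1,\ldots,\ell_N)$. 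Conversely, any $(r_1,\ldots,r_t)\in B_S(k,\ell_1,\ldots,\ell_N)$ arises from precisely $\binom{k}{r_1,\ldots,r_t}$ ordered column-sequences, namely the distinct orderings of a multiset with $r_i$ copies of $\mathbf{s}_i$. Partitioning $A_S(\ell_1,\ldots,\ell_N;k)$ according to the count vector $(r_1,\ldots,r_t)$ and then summing over all $k\ge 0$ (only finitely many $k$ yield nonempty $B_S$, since $k\le \ell_1+\cdots+\ell_N$ under the reasonable assumption that $\mathbf{0}_N\notin S$, and in general only finitely many $(r_1,\ldots,r_t)$ contribute once the footnote's restriction is applied) produces exactly equation \eqref{eq:rewrite}.

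As an alternative, I would derive the same identity analytically from the preceding theorem: apply the multinomial theorem to
\begin{align*}
f(z_1,\ldots,z_N;k) = \Bigl(\sum_{i=1}^{t} z_1^{s_{i,1}}\cdots z_N^{s_{i,N}}\Bigr)^k = \sum_{r_1+\cdots+r_t=k}\binom{k}{r_1,\ldots,r_t}\prod_{i=1}^{t} z_1^{s_{i,1}r_i}\cdots z_N^{s_{i,N}r_i},
\end{align*}
read off the coefficient of $z_1^{\ell_1}\cdots z_N^{\ell_N}$ (which forces $\sum_i r_i\mathbf{s}_i = (\ell_1,\ldots,\ell_N)$, i.e., $(r_1,\ldots,r_t)\in B_S(k,\ell_1,\ldots,\ell_N)$), and finally sum over $k\ge 0$ using $a_S(\ell_1,\ldots,\ell_N) = \sum_{k\ge 0} a_S(\ell_1,\ldots,\ell_N;k)$.

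Since both arguments are essentially bookkeeping, there is no real obstacle. The one point requiring mild care is handling the case $|S|=\infty$: one must note, as in the footnote, that whenever some coordinate of $\mathbf{s}_i$ exceeds the corresponding $\ell_j$ the multiplicity $r_i$ is forced to $0$, so only finitely many indices are ever active and the multinomial coefficient is well-defined. Once this is observed, the combinatorial partition argument goes through verbatim, yielding the claimed closed-form summation.
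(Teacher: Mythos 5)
Your combinatorial argument—partitioning the $S$-alignments of length $k$ by the multiplicity vector $(r_1,\ldots,r_t)$ of column types and counting the $\binom{k}{r_1,\ldots,r_t}$ orderings of each multiset of columns, then summing over $k$—is exactly the paper's proof, which it states in two sentences ("matching up columns of the same type $\mathbf{s}_i$; $r_i$ is the multiplicity of column type $\mathbf{s}_i$"); you have simply written out the details, and your generating-function alternative is an equivalent reformulation. The proposal is correct and takes essentially the same approach.
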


\begin{proof}
  We first note that $a_S(\ell_1,\ldots,\ell_N)=\sum_{k\ge
    0}a_S(\ell_1,\ldots,\ell_N;k)$. We then find the above formula for
  $a_S(\ell_1,\ldots,\ell_N;k)$ by matching up columns of the same
  type $\mathbf{s}_i$; $r_i$ is the \emph{multiplicity} of column type
  $\mathbf{s}_i$. 
\end{proof}

\section{Special cases}\label{sec:cases}
\subsection{The case $N=1$}\label{sec:deg}
The case $N=1$, which might be considered a degenerate case of an
alignment, yields the number of \emph{$S$-restricted integer
  compositions} (see \cite{Eger:2013,Heubach:2004}) of $\ell$, i.e., the
number 
of solutions  
$(\pi_1,\ldots,\pi_k)$ such that $\pi_1+\cdots+\pi_k=\ell$, and where
each $\pi_i\in S$. Depending on $S$, there are several closed-form
solutions for $a_S(\ell)$ as exemplified in Table
\ref{table:compositions}.  

\begin{table}[!htb]
  \begin{center}
  \begin{tabular}{|ll|}\hline
    $S=\set{1,2,3,\ldots}$ & $a_S(\ell) = 2^{\ell-1}$ \\
   % \hline
    $S=\set{1,2}$ & $a_S(\ell)=F_{\ell+1}$ \\
    $S=\set{2,3,4,\ldots}$ & $a_S(\ell)=F_{\ell-1}$ \\
    $S=\set{1,3,5,7,\ldots}$ & $a_S(\ell)=F_{\ell}$ \\
    $S=\set{1,2,\ldots,M}$ & $a_S(\ell)\sim
    \frac{\phi^{{\ell+1}}}{G'(\sigma)}$
    \\
    \hline
  \end{tabular}
  \caption{$a_S(\ell)$ for different
    $S\subseteq\mathbb{N}-\set{\mathbf{0}_1}$. Here, $F_n$ denotes the
    $n$-th Fibonacci number (see, e.g., 
\cite{Gessel:2013, Shapcott:2013}). Moreover, $M\ge 1$ and $\phi$ is
the unique positive real solution to
$\frac{1}{X^1}+\cdots+\frac{1}{X^M}=1$, $G(x)=x^1+\cdots+x^M$ and $G'$
denotes the first derivative of $G$, and $\sigma=1/\phi$
(see \cite{Malandro:2011}).}
  \label{table:compositions}
  \end{center}
\end{table}
%\noindent where 

\subsection{The case $N=2$}
\subsubsection{The case $S=\set{(1,0),(0,1),(1,1)}$}
The case $S=\set{(1,0),(0,1),(1,1)}$ is the classical case of
alignments of exactly two sequences with 
(simple) matches and skips (see
\cite{Andrade:2014,Covington:2004}). It leads to the  
well-known \emph{Delannoy numbers} (see \cite{Banderier:2004}). The central
Delannoy numbers are listed as integer sequence A001850
(see \cite{Sloane}). Table \ref{table:delannoy} provides two
identities for $a_S(\ell_1,\ell_2)$ and one 
approximate formula for the numbers. We note that the second formula
immediately follows from Theorem \ref{theorem:3} in a similar manner as
%indicated below 
for $a_{\set{(1,1),(1,2),(2,1)}}(\ell_1,\ell_2)$ 
in Section \ref{sec:next}. 
\begin{table}[!htb]
  \begin{center}
  \begin{tabular}{|cc|} \hline
    Exact & $\sum_{d\ge 0}2^d\binom{\ell_1}{d}\binom{\ell_2}{d}$ \\
          & $\sum_{d\ge
      0}\frac{(\ell_1+\ell_2-d)!}{d!(\ell_1-d)!(\ell_2-d)!}$ \\
    Approximate & $(\frac{r+\ell_2}{\ell_1})^{\ell_1}(\frac{r+\ell_1}{\ell_2})^{\ell_2}$\\
    \hline
  \end{tabular}
  \caption{Exact formulas and approximations for Delannoy
    numbers. Here $r=\sqrt{\ell_1^2+\ell_2^2}$
    (see \cite{Kiselman:preprint,Kiselman:2008}).}
  \label{table:delannoy}
  \end{center}
\end{table}
\subsubsection{The case $S=\set{(1,1),(1,2),(2,1)}$}\label{sec:next}
The case $S=\set{(1,1),(1,2),(2,1)}$ is given as integer sequence
A191588. Its diagonal $a_S(\ell,\ell)$ is given as sequence
A098479. Table \ref{table:1-2} lists two formulas, whose proofs we
sketch 
below. A more general formula for the case
$\set{(1,1),(1,2),\ldots,(1,A)}\cup\set{(2,1),\ldots,(B,1)}$, for
$A,B\ge 2$, is
provided in \cite{Eger:2013a}. 
\begin{table}[!htb]
  \begin{center}
  \begin{tabular}{|cc|} \hline
    ($\star$) &
    $\sum_{k=0}^{\ell_1}\binom{k}{2k-\ell_1}\binom{2k-\ell_1}{\ell_2-k}$\\
    %& \sum_{k\ge 0} \frac{k!}{(\ell_1-k)!(\ell_2-k)!(3k-\ell_1-\ell_2)!}\\
    %Approximate & $(\frac{r+y}{x})^x(\frac{r+x}{y})^y$\\
    ($\star\star$) &
    $\sum_{k=\lceil\frac{\max\set{\ell_1,\ell_2}}{2}\rceil}^{\min\set{\ell_1,\ell_2}}\limits\sum_{j\ge 0}(-1)^j\binom{k}{j}\binom{k-j}{\ell_1-k-j}\binom{k-j}{\ell_2-k-j}$\\
    \hline
  \end{tabular}
  \caption{Closed-form formulas for
    $a_{\set{(1,1),(1,2),(2,1)}}(\ell_1,\ell_2)$.} 
  \label{table:1-2}
  \end{center}
\end{table}
\begin{proof}
Proof of formula ($\star$): 
Let
${S}=\set{(1,1),(1,2),(2,1)}$. The
constraints on the multiplicities $(r_1,r_2,r_3)$ in
Eq.~\eqref{eq:rewrite}, Theorem \ref{theorem:3}, can then be rewritten as 
\begin{align*}
  \begin{pmatrix}
    r_1\\r_2\\r_3
  \end{pmatrix} = 
  \begin{pmatrix}
    1 & 1 & 1 \\
    1 & 1 & 2 \\ 
    1 & 2 & 1 
  \end{pmatrix}^{-1}
  \begin{pmatrix}k \\ \ell_1 \\ \ell_2 \end{pmatrix} = 
  \begin{pmatrix}
    3 & -1 & -1 \\
    -1 & 0 & 1 \\
    -1 & 1 & 0 
  \end{pmatrix}
  \begin{pmatrix}k \\ \ell_1 \\ \ell_2 \end{pmatrix}
\end{align*}
which leads to the formula $\sum_{k\ge
  0}\frac{k!}{(\ell_1-k)!(\ell_2-k)!(3k-\ell_1-\ell_2)!}$, which is
equivalent to the formula indicated. 
\end{proof}
\begin{proof}
  Proof of formula ($\star\star$):
  This identity follows by applying the inclusion/exclusion principle
  to the sets $A_i = $ set of $2\times k$ matrices  with entries
  in $\set{1,2}$ whose first and
  second row sum to $\ell_1$ and $\ell_2$, respectively, %and whose
  such that 
  column $i$ consists of $2$'s. For fixed number 
  $k$ of columns, we note that
  $a_S(\ell_1,\ell_2;k)=|A_1^c\cap\cdots\cap A_k^c|$. 
\end{proof}

\subsubsection{The case $S=\set{(1,1),(1,2),(2,1),(2,2)}$}
%It's all in here: On the Probability that Certain
%Compositions Have the Same
%Number Of Parts. 
The diagonals $a_S(\ell,\ell)$ of this case are listed as integer
sequence A051286.  
They are also known as \emph{Whitney numbers} of the fence poset
$A_\ell$, see \cite{Bona:2010}. Table \ref{table:whitney} below lists an exact formula
for $a_S(\ell_1,\ell_2)$, which
can be derived as sketched in Section \ref{sec:independence}, and an
approximate formula for $a_S(\ell,\ell)$ which is provided in the comments for the
respective integer sequence. 
%Diagonals: Whitney number of the second kind, Sequence 
%$n=\ell_1=\ell_2$. 
\begin{table}[!htb]
  \begin{center}
  \begin{tabular}{|cc|} \hline
    Exact & $\sum_{k\ge 0} \binom{\ell_1-k}{k}\binom{\ell_2-k}{k}$\\
    Approximate & $\sqrt{3+\frac{7}{\sqrt{5}}} \cdot \frac{(\frac{1+\sqrt{5}}{2})^{2\ell}}{\sqrt{8\pi \ell}}$ \\
    %Approximate & $(\frac{r+y}{x})^x(\frac{r+x}{y})^y$\\
    \hline
  \end{tabular}
  \caption{Exact and approximate formulas for
    $a_{\set{(1,1),(1,2),(2,1),(2,2)}}(\ell_1,\ell_2)$ and
    $a_{\set{(1,1),(1,2),(2,1),(2,2)}}(\ell,\ell)$, respectively.}
  \label{table:whitney}
  \end{center}
\end{table}

\subsubsection{The case $S=\set{(x,y)\,|\, x\ge 1,y\ge 0}$}
The diagonal $a_S(\ell,\ell)$ of this case is integer sequence
A047781. A closed-form 
formula for $a_S(\ell_1,\ell_2)$ can easily be established as (see
\cite{Kimberling:2001}) 
\begin{align*}
  \sum_{k\ge 0}\binom{\ell_1-1}{k-1}\binom{\ell_2+k-1}{k-1}.
\end{align*}
This result follows from Section \ref{sec:independence} and by noting
that the number of \emph{compositions}
(that is, $\set{1,2,3,\ldots}$-restricted integer compositions in the sense of
Section \ref{sec:deg}) of $n$ with exactly $k$ parts is
$c_{\set{1,2,3,\ldots}}(n,k)=\binom{n-1}{k-1}$ and the number of \emph{weak
compositions} ($\mathbb{N}$-restricted integer compositions in the
above sense) of $n$ with exactly $k$ parts is 
$c_{{\set{0,1,2,3,\ldots}}}(n,k)=\binom{n+k-1}{k-1}$. A comment for
the integer sequence lists the asymptotic
\begin{align*}
  a_S(\ell,\ell)\sim
  \frac{2^{1/4}\cdot(3+2\sqrt{2})^\ell}{4\sqrt{\pi\ell}}. 
\end{align*}

%\subsubsection{The case $S=\set{(1,1),(1,2),(2,1),(2,2),(1,0),(0,1)}$}
\subsubsection{Similar cases}
We may construct more such examples \emph{ad libitum}. For instance, the
case $S=\set{(1,1),(1,3),(3,1)}$ is listed as integer sequence
A098482. The case $S=\set{(1,0),(1,1),(1,2),(2,1)}$ is listed as
integer sequence A191354. When $S=\set{(x,y)\,|\, 1\le x\le 3, 1\le
  y\le 3}$, then similarly as for the Whitney numbers, we may derive 
\begin{align*}
  a_S(\ell_1,\ell_2)=\sum_{k\ge 0}\left(\sum_{i\ge
    0}\binom{k}{i}\binom{k-i}{\ell_1-k-2i}\right)\left(\sum_{i\ge
    0}\binom{k}{i}\binom{k-i}{\ell_2-k-2i}\right). 
\end{align*}

\subsection{The case $N=3$}
Let $S=\set{(x,y,z)\,|\, 0\le x,y,z\le 1}-\set{\mathbf{0}_3}$. Then, 
$a_S(\ell,\ell,\ell)$ is integer sequence A126086. The comments for
this sequence list 
%the following curious formula for the general
%case of $N$ sequences:
%
%General formula:
%\begin{align}\label{eq:N=3}
%  a_S(\ell_1,\ldots,\ell_N) = \sum_{\mu\ge
%    0}\binom{\mu}{\ell_1}\cdots\binom{\mu}{\ell_N}\cdot\frac{1}{2^{\mu+1}}.
%\end{align}
as approximate formula: %is also listed:
\begin{align*}
  a_S(\ell,\ldots,\ell) \sim c\cdot d^\ell, 
\end{align*}
where $d=12\cdot 2^{2/3}+15\cdot 2^{1/3}+19$ and $c$ is a specified
constant. Further formulas can be retrieved as special cases of the
results in Section \ref{sec:bio}.

\subsection{The case $S=S_{X_1}\times\cdots\times S_{X_N}$}\label{sec:independence} 
If $S_{X_1},\ldots,S_{X_N}\subseteq \mathbb{N}$ are base sets and $S=S_{X_1}\times\cdots\times S_{X_N}$ (`independence'), then
$a_S(\ell_1,\ldots,\ell_N)$ is given by
\begin{align*}
  \sum_{k\ge 0}c_{S_{X_1}}(\ell_1;k)\cdots c_{S_{X_N}}(\ell_N;k),
\end{align*}
where $c_{S_X}(\ell;k)$ denotes the number of composition of $\ell$
with exactly $k$ parts, each of which is in $S_X$. 
Accordingly, when $\ell_1=\cdots=\ell_N=\ell$ and all base sets are identical to some set $S_X$, then this becomes 
\begin{align*}
  a_S(\ell,\ldots,\ell)=\sum_{k\ge 0}c_{S_X}(\ell;k)^N.
\end{align*}
%Can give a bound, middle coefficient is largest. 
When, e.g., $S_X=\set{1,\ldots,M}$, for some $M\ge 1$, then
%$c_{S_X}(n;k)=c_{\tilde{S}_X}(n-k;k)$, where $\tilde{S}_X=$
the numbers $c_{S_X}(\ell;k)$ are unimodal in $\ell$ for fixed $k$
(see \cite{Belbachir:2007,Belbachir:Farid:2007,Belbachir:2008}) and may be
approximated by a 
normal curve as in \cite{Eger:2014}.  

\subsection{The case $S=\set{(s_1,\ldots,s_N) \:|\:    0\le s_i\le
    1}-\set{\mathbf{0}_N}$}\label{sec:bio}
This is the classical case of alignments of $N$ sequences considered
in computational biology. An exact formula for
$a_S(\ell_1,\ldots,\ell_N)$ is given by (see
\cite{Slowinski:1998}) 
\begin{align}\label{eq:Slowinski}
  \sum_{k=\max\set{\ell_1,\ldots,\ell_N}}^{\ell_1+\cdots+\ell_N}\sum_{j=0}^k (-1)^j\binom{k}{j}\prod_{j=1}^N\binom{k-j}{\ell_j}.
\end{align} 
%besides formula \eqref{eq:N=3}. 
Eq.~\eqref{eq:Slowinski} can be
derived via the inclusion/exclusion principle very similarly as
above in Section \ref{sec:next}. 
Another closed-form formula is given in a comment to integer sequence
A126086, namely:
\begin{align}\label{eq:N=3}
  a_S(\ell_1,\ldots,\ell_N) = \sum_{\mu\ge
    0}\binom{\mu}{\ell_1}\cdots\binom{\mu}{\ell_N}\cdot\frac{1}{2^{\mu+1}}.
\end{align}
An approximate formula for $a_S(\ell,\ldots,\ell)$ has also been 
established (see \cite{Griggs:1990,Raichev:2007}):
\begin{align*}
  a_S(\ell,\ldots,\ell)\sim
  (2^{1/N}-1)^{-N\ell}\frac{1}{(2^{1/N}-1)2^{(N^2-1)/2N}\sqrt{N(\pi\ell)^{N-1}}}. 
\end{align*}

\subsection{The case $S=\mathbb{N}^N-\set{\mathbf{0}_N}$}
In the case when no non-zero step from $\mathbb{N}^N$ is discarded, a
formula is given by (see \cite{Andrews:1975}):
\begin{align*}
  a_S(\ell_1,\ldots,\ell_N) = \sum_{k\ge 0}^{\ell_1+\cdots+\ell_N}\sum_{i=0}^k
  (-1)^i\binom{k}{i}\prod_{j=1}^N\binom{\ell_j+k-i-1}{\ell_j}. 
\end{align*}
%This formula is also presented in \cite{Munarini:2008}. 
It has been noted (see \cite{Duchi:2004}) that
$a_{\mathbb{N}^N-\set{\mathbf{0}_N}}(\ell,\ldots,\ell)=2^{\ell-1}a_{\set{(s_1,\ldots,s_N)
    \:|\:    0\le s_i\le 
    1}-\set{\mathbf{0}_N}}(\ell,\ldots,\ell)$, which leads to the formula
\begin{align*}
  a_{\mathbb{N}^N-\set{\mathbf{0}_N}}(\ell,\ldots,\ell) = \sum_{\mu\ge 0} \binom{\mu}{\ell}^N2^{\ell-\mu-2}.
\end{align*} 

\subsection{The case $S=\set{(s_1,\ldots,s_N) \:|\:    1\le s_i\le 2}$}\label{sec:new}
As a closed-form formula for this case, we obtain as a %generalization
specialization 
of our above results in Section \ref{sec:independence}:
\begin{align*}
  a_S(\ell_1,\ldots,\ell_N) = \sum_{k\ge 0}\prod_{i=1}^N\binom{\ell_i-k}{k}.
\end{align*}
To derive an asymptotic formula for $a_S(\ell,\ldots,\ell)$, we resort
to the techniques for computing asymptotics of diagonal coefficients
of multivariate generating 
functions (see \cite{Pemantle:2008,Raichev:2007}).
% In our setup, we write 
This leads us to the following theorem.
\begin{theorem}\label{theorem:main}
  Let $\phi=\frac{\sqrt{5}-1}{2}$. Moreover, let
  $A=-\phi^{N-1}(1+\phi)^{N-1}(1+2\phi)$. 
  %$B=-2(1+\phi)^{N-1}\phi^{N-1}$, 
  %$Q=-\phi^{N-2}(1+\phi)^{N-2}$, and 
  %$C=Q(1+2\phi)^2$. 
  Define $h
  %=N(1+\frac{\phi}{A}(B-C))^{N-1}=N(1-\frac{1+2\phi+2\phi^2}{1+3\phi+2\phi^2})^{N-1}
  =N\Bigl(\frac{\phi}{1+3\phi+2\phi^2}\Bigr)^{N-1}$ and 
  $b_0=\frac{1}{-\phi A \sqrt{(2\pi)^{N-1}h}}$. Then
  \begin{align*}
    a_S(\ell,\ldots,\ell) \sim \phi^{-\ell N}b_0{\ell}^{(1-N)/2}.
  \end{align*}
\end{theorem}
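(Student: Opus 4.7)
The plan is to apply the second theorem of the paper to obtain the multivariate generating function, then extract the diagonal asymptotic $[z_1^\ell \cdots z_N^\ell]$ by the saddle-point techniques of Pemantle--Wilson and Raichev--Wilson cited above. Since $S$ is the Cartesian product $\set{1,2}^N$, the sum in the denominator factors and
\begin{align*}
  f(z_1,\ldots,z_N) = \frac{1}{Q(z_1,\ldots,z_N)}, \qquad Q(z_1,\ldots,z_N) = 1 - \prod_{i=1}^N z_i(1+z_i).
\end{align*}

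First, I would locate a strictly minimal smooth critical point on the singular variety $\set{Q=0}$. By symmetry the natural candidate is the diagonal point $z_1=\cdots=z_N=\phi$, where $\phi(1+\phi)=1$, i.e.\ $\phi=(\sqrt{5}-1)/2$. It is a smooth point of $Q=0$ because $\partial_{z_N}Q$ evaluated there equals $-\phi^{N-1}(1+\phi)^{N-1}(1+2\phi)=A\ne 0$, and it is strictly minimal on the torus $|z_i|=\phi$ since $|z_i(1+z_i)|\le\phi(1+\phi)=1$ with equality only at $z_i=\phi$. Applying Cauchy's formula in $N$ variables and extracting the $z_N$-pole of $1/Q$ via its local Laurent expansion reduces the problem to an $(N-1)$-fold contour integral over the torus $z_j=\phi e^{i\theta_j}$, $j<N$, with integrand essentially $-[\partial_{z_N}Q]^{-1}\cdot z_N(z_1,\ldots,z_{N-1})^{-\ell-1}\prod_{j<N}e^{-i\ell\theta_j}$.

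For the Laplace expansion I would switch to logarithmic coordinates $u_j=\log z_j$ and exploit that the constraint $Q=0$ reads $\sum_j \tilde L(u_j)=0$ with $\tilde L(u)=u+\log(1+e^u)$. Implicit differentiation at $u_j=\log\phi$ immediately gives $\partial_j v|_*=-1$, where $v=\log z_N$, so the oscillatory phases $e^{-i\ell\theta_j}$ cancel the linear part of $z_N^{-\ell}$. Differentiating once more produces a Hessian of the form $\alpha(I+J)$, where $J$ is the $(N-1)\times(N-1)$ all-ones matrix and $\alpha=\tilde L''(\log\phi)/\tilde L'(\log\phi)$. A short direct calculation yields $\tilde L'(\log\phi)=(1+2\phi)/(1+\phi)$ and $\tilde L''(\log\phi)=\phi/(1+\phi)^2$, so $\alpha=\phi/((1+\phi)(1+2\phi))=\phi/(1+3\phi+2\phi^2)$; since $\det(I+J)=N$, the Hessian determinant is exactly $h=N\alpha^{N-1}$.

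The $(N-1)$-dimensional Gaussian integral then contributes $(2\pi/(\ell\alpha))^{(N-1)/2}/\sqrt{N}$. Combining this with the residue prefactor $-1/A$, the extra $\phi^{-1}$ coming from $z_N^{-(\ell+1)}$ versus $z_N^{-\ell}$, and the factor $\phi^{-\ell(N-1)}$ produced by the first $N-1$ torus variables, everything collects cleanly into $\phi^{-\ell N} b_0\,\ell^{(1-N)/2}$ with $b_0 = 1/(-\phi A \sqrt{(2\pi)^{N-1} h})$, as claimed. The main obstacle is the rigorous justification of replacing the original torus integral by its Gaussian approximation --- i.e., the uniform estimate that the tail contribution from $|\theta|\ge\delta$ is exponentially smaller than the main term --- and this is precisely what is supplied by the smooth-point multivariate asymptotics machinery of the cited references, given the strict minimality verified above.
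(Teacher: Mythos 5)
Your proposal is correct and follows essentially the same route as the paper: identify the contributing smooth strictly minimal critical point $(\phi,\ldots,\phi)$ of the denominator $1-\prod_i z_i(1+z_i)$ and apply the Raichev--Wilson/Pemantle--Wilson smooth-point diagonal asymptotics. The only difference is one of detail rather than method --- the paper invokes Proposition 3.4 and Theorems 3.1--3.3 of the cited reference as a black box, whereas you carry out the residue-plus-Laplace computation explicitly and verify that the Hessian determinant $h=N\alpha^{N-1}$ and the constants $A$ and $b_0$ come out as stated, which is a welcome check on the theorem's formulas.
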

\begin{proof}
  Let
  $\frac{I(\mathbf{z})}{J(\mathbf{z})}=\frac{1}{1-\sum_{\mathbf{s}\in
      S}\mathbf{z}^{\mathbf{s}}}$ be the multivariate generating function
  for the number of $S$-alignments, with $S$ as indicated. With
  notation as in \cite{Raichev:2007}, we call
  $\mathtt{CRIT}(\ell,\ldots,\ell)$ the set of solutions $\mathbf{x}\in\mathbb{R}^N$ satisfying
  $J(\mathbf{x})=0$ and $\ell x_i\frac{\partial J(\mathbf{x})}{\partial 
    z_i}=\ell x_N\frac{\partial J(\mathbf{x})}{\partial z_N}$ (for all 
  $i<N$). Then, $\mathtt{CRIT}(\ell,\ldots,\ell)$ is finite and there
  is exactly one  \emph{contributing
    point}
  $\mathbf{c}=(\phi,\ldots,\phi)$, as defined in %Raichev and Wilson
  \cite{Raichev:2007}. We
  find $\phi$ by induction on $N$, starting from $N=1$: $1-x-x^2=0$;
  in general, we consider the positive real root of
  $1-\sum_{i=0}^N\binom{N}{i}x^{N+i}$. Finally, we apply Proposition
  3.4 and Theorems 3.1-3.3 in \cite{Raichev:2007}
  to derive the asymptotic.  
\end{proof}
\begin{example}\label{example:comparison}
Table \ref{table:comparison} compares the numbers
$a_S(\ell,\ell,\ell)$ for $S_0=\set{(s_1,s_2,s_3) \:|\:    1\le
  s_i\le 2}$ and $S_1=\set{(s_1,s_2,s_3) \:|\:    0\le s_i\le
    1}-\set{\mathbf{0}_3}$, respectively. We note that the latter numbers
grow much more rapidly. In fact, under $S=S_0$, $a_S(10,10,10)$ is
still quite moderate (68933), while under $S=S_1$, this number %already exceeds
%the estimated number of atoms in the universe.    
%$9.85\cdot 10^{15}$. 
amounts to almost $10^{16}$. 
  \begin{table}[!htb]
    \begin{center}
    \begin{tabular}{|lll|} \hline
       $\ell$      & $\set{(s_1,s_2,s_3) \:|\:    1\le s_i\le 2}$ & $\set{(s_1,s_2,s_3) \:|\:    0\le s_i\le
    1}-\set{\mathbf{0}_3}$\\ \hline
      1 &1& 13 \\
2 &2& 409\\
3 &9& 16081\\
4 &29& 699121\\
5 &92& 32193253\\
6 &343 & 1538743249\\
7 &1281 & 75494983297\\
8 &4720 & 3776339263873\\
9 &17899 & 191731486403293\\
10 &68933 & 9850349744182729\\
      \hline
    \end{tabular}
    \caption{Comparing numbers for cases of Sections \ref{sec:bio} and
      \ref{sec:new}. Numbers are calculated via Theorem
\ref{theorem:count}.}
    \label{table:comparison}
    \end{center}
  \end{table}
\end{example}
\begin{example}
  We let $N=3$, and
  $S=\set{(1,1,1),(2,1,1),(1,2,1),(1,1,2),(2,2,1),(2,1,2),(1,2,2),(2,2,2)}$. Table
  \ref{table:sketch} sketches exact and approximate values, computed
  from Theorem \ref{theorem:main}, of
  $a_S(\ell,\ell,\ell)$ for $\ell=1,\ldots,20$, and approximation
  errors. 
  \begin{table}[!htb]
    \begin{center}
    \begin{tabular}{|llll|} \hline
             &  (I) & (II) & \\
      $\ell$ & $a_S(\ell,\ell,\ell)=$ & $a_S(\ell,\ell,\ell)\sim$ &
      $\frac{|{\text{(I)}-\text{(II)}}|}{\text{(I)}}$ \\ \hline
      1 &1& 1.6489 & 0.393\\
2 &2& 3.4924 & 0.427 \\
3 &9& 9.8626 & 0.087\\
4 &29& 31.334 & 0.074\\
5 &92& 106.19 & 0.133 \\
6 &343& 374.84 & 0.084\\
7 &1281& 1361.00 & 0.058 \\
8 &4720& 5044.70 & 0.064 \\
9 &17899& 18995.30 & 0.057 \\
10 &68933& 72418.85 & 0.048\\
11 &266364& 278882.88 & 0.044 \\
12 &1037423& 1082919.63 & 0.042\\
13 &4072439& 4234450.32 & 0.038 \\
14 &16065148& 16656175.18 & 0.035 \\
15 &63658521& 65852910.95 & 0.033 \\
16 &253356763& 261522569.36 & 0.031 \\
17 &1012049086& 1042661064.91 & 0.029 \\
18 &4055596343& 4171406306.87 & 0.027 \\
19 &16299779331& 16740341694.65 & 0.026 \\
20 &65683233938& 67367564115.86 & 0.025 \\
      \hline
    \end{tabular}
    \caption{Exact and approximate values of $a_S(\ell,\ell,\ell)$ for $S=\set{(s_1,s_2,s_3) \:|\:    1\le
  s_i\le 2}$. Exact numbers are calculated via Theorem
\ref{theorem:count}.}
    \label{table:sketch}
    \end{center}
  \end{table}
\end{example}

\bibliographystyle{jalc}
\bibliography{lit}

\end{document}